\documentclass[12pt,reqno]{amsart}
\calclayout
\usepackage{amssymb}
\usepackage{mathtools}
\usepackage[hidelinks]{hyperref}
\newtheorem{Thm}{Theorem}[section]

\newtheorem{Lem}[Thm]{Lemma}
\newtheorem{Prop}[Thm]{Proposition}
\newtheorem{Cor}[Thm]{Corollary}

\newtheorem{Conj}[Thm]{Conjecture}

\newcommand{\CC}{\mathbb{C}}
\newcommand{\bk}{\boldsymbol{k}}
\newcommand{\bl}{\boldsymbol{l}}
\newcommand{\QQ}{\mathbb{Q}}

\newcommand{\ZZ}{\mathbb{Z}}
\newcommand{\lf}{\leftarrow}
\newcommand{\up}{\uparrow}
\newcommand{\down}{\downarrow}
\newcommand{\wt}{\mathrm{wt}}
\newcommand{\ol}{\overline}

\begin{document}
\title[Hirose's relation for finite multiple harmonic $q$-series]{An analogue of Hirose's relation for finite multiple harmonic $q$-series at roots of unity}
\author{Takumi Maesaka}
\email{nozaki.takumi.912@s.kyushu-u.ac.jp}
\subjclass{11M32, 11R18, 05A30}
\keywords{finite multiple harmonic $q$-series, roots of unity, cyclic sum formula, connected sums}
\begin{abstract}
Recently, Hirose proved an analogue of the linear part of Kawashima's relations for refined symmetric multiple zeta values. In this paper, we establish an analogue of Hirose's relation for finite multiple harmonic $q$-series at roots of unity. As an application, we  prove the cyclic sum conjecture proposed by Kh. Hessami Pilehrood, T. Hessami Pilehrood, and R. Tauraso.
\end{abstract}
\maketitle

\section{Introduction}

For an index $\bk\coloneqq(k_1,\dots,k_r)\in (\ZZ_{>0})^r$, a positive integer $N$, and a complex number $q$ with $q^n\neq 1$ for $0<n<N$, we define
\[
z_N(\bk;q)\coloneqq\sum_{0<n_1<\cdots<n_r< N}\prod_{i=1}^r\frac{q^{(k_i-1)n_i}}{[n_i]_q^{k_i}},
\]
where
\[
[n]_q\coloneqq\frac{1-q^n}{1-q}
\]
is the $q$-integer. For the primitive $N$-th root of unity $\zeta_N\coloneqq e^{\frac{2\pi i}{N}}$, we use the notation
\[
z_N(\bk)\coloneqq z_N(\bk;\zeta_N).
\]

Hirose \cite{h1} introduced refined symmetric multiple zeta values (RSMZVs) and proved that they coincide with $\xi$-values of Bachmann, Takeyama, and Tasaka \cite{btt}. In other words,
\[
\lim_{N\to\infty} z_N(\bk)=\zeta_{\mathcal{RS}}(\bk).
\]
Moreover, Bachmann, Takeyama, and Tasaka \cite{btt} showed that
\[
(z_p(\bk)\,\mathrm{mod}\,\mathfrak{p})_p=\zeta_{\mathcal{A}}(\bk),
\]
where $\mathfrak{p}\coloneqq (1-\zeta_p)$ is the prime ideal of $\ZZ[\zeta_p]$ generated by $1-\zeta_p$. Kaneko and Zagier \cite{kz} introduced the finite multiple zeta values
(FMZVs), which are defined as follows:
\[
\zeta_{\mathcal{A}}(k_1,\dots,k_r)\coloneqq\left(\sum_{0<n_1<\cdots<n_r<p}\frac 1{n_1^{k_1}\cdots n_r^{k_r}}\,\mathrm{mod}\,p\right)_p\in \left(\prod_{p:\mathrm{prime}}\ZZ/p\ZZ\right)\bigg/\left(\bigoplus_{p:\mathrm{prime}}\ZZ/p\ZZ\right).
\]
Thus, the sums $z_N$ introduced by Bachmann, Takeyama,
and Tasaka simultaneously generalize RSMZVs and FMZVs.

For an index $\bk\coloneqq(k_1,\dots,k_r)\in (\ZZ_{>0})^r$ with $k_r\geq 2$, multiple zeta values (MZVs) are defined by
\[
\zeta(\bk)\coloneqq \sum_{0<n_1<\cdots<n_r}\frac 1{n_1^{k_1}\cdots n_r^{k_r}}.
\]

Kawashima \cite{k} established a family of algebraic relations among multiple zeta values, now known as Kawashima's relations. Their linear part is known to imply several important relations among multiple zeta values, including duality, the derivation relations, Ohno's relations, and the cyclic sum formula.

Recently, Hirose \cite{h2} used the multitangent functions introduced by Bouillot \cite{b} to establish an analogue of the linear part of Kawashima's relations for refined symmetric multiple zeta values.

\begin{Thm}[Hirose {\cite[Theorem 9]{h2}}]
For non-empty indices $\bk=(k_1,\dots,k_r)$ and $\bl=(l_1,\dots,l_s)$ with $k_1,k_r, l_1,l_s\geq 2$, the identity
\[
\zeta_{\mathcal{RS}}(({}_{\down}(\bk*\bl)_{\down})^{\vee})=0
\]
holds.
\end{Thm}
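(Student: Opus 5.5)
The plan is to prove the stronger identity at finite level,
\[
z_N\bigl(({}_{\down}(\bk*\bl)_{\down})^{\vee}\bigr)=0 \qquad\text{for every } N\geq 1,
\]
and then let $N\to\infty$. Since $\lim_{N\to\infty}z_N(\bk)=\zeta_{\mathcal{RS}}(\bk)$ (Hirose, Bachmann--Takeyama--Tasaka), the statement follows by linearity once the finite identity is known. Here $*$ is the harmonic product, extended linearly, ${}_{\down}(\cdot)_{\down}$ lowers the first and last entries by one (this is why $k_1,k_r,l_1,l_s\geq 2$ is needed), and $\vee$ is Hoffman's dual. The finite identity would also give, by reduction modulo $\mathfrak p$, the corresponding statement for $\zeta_{\mathcal A}$.

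\textbf{Step 1: a duality-type connected sum for $z_N$ at $\zeta_N$.} I would introduce a connected sum $C_N(\bk\mid\bl)$. It is a double nested sum over $0<m_1<\dots<m_r$ and $0<n_1<\dots<n_s$, with $m_r+n_s\le N$ or a similar constraint. Its summand is the product of the $z_N$-type summands $q^{(k_i-1)m_i}/[m_i]_q^{k_i}$ and a connecting factor built from $q$-binomial coefficients, e.g.\ $\binom{N-1}{m}_q$-type weights. The point of evaluating at $q=\zeta_N$ is the identity
\[
\binom{N-1}{n}_{\zeta_N}=(-1)^n\zeta_N^{-n(n+1)/2},
\]
which turns these weights into pure signs and powers of $\zeta_N$. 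I would then prove transport relations: moving one unit of depth or weight from the $\bk$-side to the $\bl$-side leaves $C_N$ unchanged. Iterating, one boundary case of $C_N$ equals $z_N(\bk)$ and the other equals $\pm z_N(\bk^\vee)$ up to the $\down$-shift. This gives the analogue of Hoffman duality at the level of the $z_N$.

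\textbf{Step 2: introducing the harmonic product.} The $\down$ operations at both ends suggest writing $z_N(({}_{\down}\bk_{\down})^\vee)$ as a connected sum in which the first and last variables are singled out. Then $z_N$ of a harmonic product should become a sum over a two-parameter family, with the two outer variables shared by $\bk$ and $\bl$. Concretely, I would express $\sum_{\bm u\in \bk*\bl}$ as a single sum of products of two $z_N$-type truncated sums, using the standard decomposition of the harmonic product by the position of the largest index. I would then feed each factor through the connected sum of Step 1, which turns the whole expression into a double sum symmetric in a pair of boundary variables $m,n$.

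\textbf{Step 3: cancellation, and the main obstacle.} I expect Step 3 to be the main obstacle: showing that this double sum vanishes at $\zeta_N$. My first attempt would be the involution $n\mapsto N-n$. At roots of unity it sends
\[
[N-n]_{\zeta_N}\mapsto -\zeta_N^{-n}[n]_{\zeta_N}
\]
and $\zeta_N^{(k-1)n}\mapsto \zeta_N^{-(k-1)n}$, so the summand for $(\bk,\bl)$ should map to minus the summand with $\bk$ and $\bl$ interchanged. The ${}_{\down}$-shifts are what make the leftover powers of $\zeta_N$ match exactly. If the signs do not line up directly, I would instead exhibit a telescoping in $m_r+n_s$, using the relation
\[
[m+n]_q=[m]_q+q^m[n]_q
\]
to split each boundary factor. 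Once the vanishing for all $N$ is established, the limit argument above completes the proof.
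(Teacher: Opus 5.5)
Your overall architecture (prove a finite-$N$ identity, then let $N\to\infty$) is how the paper recovers Hirose's theorem. The gap is that the finite identity you state is false: with the ordinary harmonic product $*$ it does not vanish.

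Take $\bk=\bl=(2)$. Then $(2)*(2)=2(2,2)+(4)$. Applying ${}_{\down}(\cdot)_{\down}$ gives $2(1,1)+(2)$, and dualizing gives $2(2)+(1,1)$. Using $\sum_{n=1}^{N-1}(1-\zeta_N^n)^{-1}=\frac{N-1}{2}$ and $\sum_{n=1}^{N-1}\zeta_N^n(1-\zeta_N^n)^{-2}=-\frac{N^2-1}{12}$, one finds
\[
2z_N(2)+z_N(1,1)=-\frac{N-1}{2}(1-\zeta_N)^2 .
\]
This already equals $-2$ at $N=2$, where $z_2(2)=-1$ and $z_2(1,1)=0$.

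What vanishes for every $N$ is the version with the $q$-harmonic product $*_{\zeta_N}$. The product $(2)*_{\zeta_N}(2)$ contains the extra term $(1-\zeta_N)(3)$. Its contribution $(1-\zeta_N)z_N(1)=\frac{N-1}{2}(1-\zeta_N)^2$ cancels the discrepancy exactly.

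This correction is forced, not cosmetic. Because
\[
\frac{q^{(k-1)n}}{[n]_q^{k}}\cdot\frac{q^{(l-1)n}}{[n]_q^{l}}=\frac{q^{(k+l-1)n}}{[n]_q^{k+l}}+(1-q)\frac{q^{(k+l-2)n}}{[n]_q^{k+l-1}},
\]
the truncated sums in your Step 2 multiply according to $*_q$. So Step 3 could never establish vanishing for $*$. Once corrected, your limit argument does go through: the extra terms carry positive powers of $1-\zeta_N\to 0$, while $z_N$ of each fixed index converges.

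Separately, Step 3 contains no actual cancellation mechanism, and the one the paper uses is not an involution at the root of unity. The paper stays at generic $|q|<1$ with an auxiliary variable $t$.
\begin{itemize}
\item Connected sums with connector $C(n,m)$ give the duality $\widetilde{\zeta}_q(\bk;t)=\zeta_q(\bk^{\dagger};t)$.
\item The $t$-deformed series $\zeta_q(\cdot\,;t)$ satisfy the $*_+$-product.
\item Heine's summation collapses $\widetilde{\zeta}_q((\overleftarrow{\bk})^{\dagger};t)\,\widetilde{\zeta}_q((\overleftarrow{\bl})^{\dagger};t)$ into a single series in the basis $\frac{(q;q)_r}{(tq;q)_r}t^r$.
\end{itemize}
Because $(\overleftarrow{\bk}*_+\overleftarrow{\bl})^{\dagger}=(({}_{\down}(\bk*_+\bl)_{\down})^{\vee},2)$ ends in $2$, the coefficient at $r=N$ on one side is $\ol{z}_N(({}_{\down}(\bk*_+\bl)_{\down})^{\vee};q)\,q^N(1-q^N)^{-2}$. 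The Heine side has at most a simple pole at $q=\zeta_N$. Multiplying by $(1-q^N)^2$ and letting $q\to\zeta_N$ yields zero.

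So the vanishing comes from comparing pole orders in $q$. In your plan everything is specialized to $q=\zeta_N$ from the start, so there is no pole to compare. The involution $n\mapsto N-n$ or the telescoping you sketch would have to be a new argument, and the proposal does not supply it.
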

Here, $*$ denotes the harmonic product, and $\bk^\vee$ is the Hoffman dual index of $\bk$ (for the definitions of these symbols, see Section 2). We generalize this relation to finite multiple harmonic $q$-series at roots of unity. The following is our main result.

\begin{Thm}\label{thm:main}
For non-empty indices $\bk=(k_1,\dots,k_r)$ and $\bl=(l_1,\dots,l_s)$ with $k_1,k_r, l_1,l_s\geq 2$, the identity
\[
z_N(({}_{\down}(\bk*_{\zeta_N}\bl)_{\down})^{\vee})=0
\]
holds.
\end{Thm}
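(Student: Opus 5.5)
Throughout, we write $\zeta=\zeta_N$ and work with generating functions in the variable $q$, specialising to $q=\zeta$ only at the very end.

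\textbf{Step 1 (dual form).} The first step is to rewrite $z_N(\bk^\vee)$ for a dual index as a sum over $0<n_1\le\cdots\le n_r<N$ of $q$-weighted terms. The Hoffman dual exchanges ``strict'' and ``non-strict'' inequalities, so I expect a formula of the shape
\[
z_N(\bk^\vee)=\pm\, z_N^{\star}(\bk)\quad\text{(up to explicit }q\text{-factors)},
\]
obtained from the standard $q$-analogue of the Hoffman identity; the simplest realisation is the partial-fraction identity in $[n]_q$. Then ${}_{\down}(\cdot)_{\down}$, which lowers the first and last entries by one, should correspond to multiplying the summand by $[n_1]_q$ and $[n_r]_q$, or equivalently to shifting the extremal exponents.

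\textbf{Step 2 (connected sum).} The core of the argument is a connected-sum identity, in the spirit of Seki--Yamamoto, adapted to the harmonic product $*_{\zeta}$. Define
\[
Z(\bk\mid\bl)=\sum \bigl(\text{terms for }\bk\text{ on }0<m_1<\cdots<m_r\bigr)\,C(m_r,n_s)\,\bigl(\text{terms for }\bl\text{ on }0<n_1<\cdots<n_s\bigr),
\]
where the connector $C$ is a $q$-binomial ratio, for instance $\binom{N-1}{m}_q$-type products. At $q=\zeta$ these reduce to powers of $\zeta$, since $\binom{N-1}{m}_{\zeta}=(-1)^m\zeta^{-m(m+1)/2}$. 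This is exactly why roots of unity make the sums close up.

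\textbf{Step 3 (transport).} Using elementary transport relations, one moves the entries one at a time from $\bl$ to $\bk$. Evaluating $Z$ at the boundary $\bl=\emptyset$ then gives $z_N$ of the dual of $\bk$, with $\bl$ reversed and attached. Carried out carefully, the transport expresses $Z(\bk\mid\bl)$ as $z_N$ applied to the harmonic product, giving
\[
z_N\bigl(({}_{\down}(\bk*_{\zeta}\bl)_{\down})^{\vee}\bigr)=Z_0(\bk\mid\bl),
\]
where $Z_0$ is a boundary term.

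\textbf{Step 4 (vanishing).} Finally, one shows that the boundary term $Z_0$ vanishes. The conditions $k_1,k_r,l_1,l_s\ge2$ guarantee that after lowering the extremal entries nothing degenerates. The vanishing itself should come from the reflection $n\mapsto N-n$, using
\[
[N-n]_{\zeta}=-\zeta^{-n}[n]_{\zeta},
\]
which produces an antisymmetry that kills the boundary term.

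As an alternative route, following Hirose, one can use the $q$-multitangent or cotangent representation. Since $\sum_{n \bmod N}$ of a rational function in $\zeta^n$ is computable by residues, one would identify the generating function of the left-hand side as a sum of residues of $\prod(\text{multitangent in }\bk)(\text{in }\bl)$, which is a rational function whose total residue is zero.

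The main obstacle is matching the $\zeta$-deformed harmonic product $*_{\zeta}$ exactly with the transport relations. The extra terms in $*_{\zeta}$, which involve the factor $(1-\zeta)$, must correspond precisely to the $q$-powers produced when merging $m=n$ in the connector. I would first verify the case $r=s=1$ by direct computation to calibrate the connector, and then induct on $r+s$.
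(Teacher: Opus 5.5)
Your outline is not yet a proof. The two steps that carry all the content, namely how the harmonic product enters and why the resulting quantity vanishes, are stated as expectations, and the mechanism you propose for the second one cannot work.

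\emph{Step 3.} Transport relations such as $Z(\bk_{\to};\bl)=Z(\bk;\bl_{\up})$ only move entries across the connector. Iterating them gives duality, never a harmonic product: $\bk*\bl$ arises only when two series are multiplied in a common summation variable. That is why you are left naming the ``matching of $*_{\zeta}$ with transport'' as the main obstacle. Nor are the extra $(1-\zeta)$-terms of $*_{\zeta}$ produced by merging anything inside a connector. In the paper they are absorbed by passing to $\ol{z}_N$ and $*_+$. The product then comes from the termwise identity $\frac{x^{k-1}}{(1-x)^k}\cdot\frac{x^{l-1}}{(1-x)^l}=\frac{x^{k+l-1}}{(1-x)^{k+l}}+\frac{x^{k+l-2}}{(1-x)^{k+l-1}}$ with $x=tq^n$, which gives $\zeta_q(\bk;t)\zeta_q(\bl;t)=\zeta_q(\bk*_+\bl;t)$ for $|q|,|t|<1$. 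The connected sum is used only to prove the duality $\widetilde{\zeta}_q(\bk;t)=\zeta_q(\bk^{\dagger};t)$, which transfers this product to $\widetilde{\zeta}_q$. Its connector $C(n,m)$ depends on auxiliary variables $s,t$, not on $N$.

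\emph{Step 4.} The reflection $n\mapsto N-n$ yields only the reversal symmetry $z_N(\bk)=(-\zeta_N)^{\wt(\bk)}\overline{z_N(\overleftarrow{\bk})}$. This relates values pairwise and does not force Kawashima-type combinations to vanish in general. Moreover, $Z_0$ is never defined, so there is no identified object for an antisymmetry to kill.

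The missing idea is a pole-order comparison at $q=\zeta_N$. This requires keeping $q$ generic rather than evaluating a connector like $\binom{N-1}{m}_q$ at $\zeta_N$ from the outset. The argument runs as follows.
\begin{enumerate}
\item Set $\bk'=(\overleftarrow{\bk})^{\dagger}$ and $\bl'=(\overleftarrow{\bl})^{\dagger}$. Heine's $q$-Gauss sum rewrites $\widetilde{\zeta}_q(\bk';t)\widetilde{\zeta}_q(\bl';t)$ as a single series in the basis $\{(q;q)_rt^r/(tq;q)_r\}_{r\ge0}$ of $\CC[[q]][[t]]$, and one compares coefficients at $r=N$.
\item Since $k_1,k_r,l_1,l_s\ge2$, every term of $(\overleftarrow{\bk}*_+\overleftarrow{\bl})^{\dagger}$ ends in $2$; indeed it equals $(({}_{\down}(\bk*_+\bl)_{\down})^{\vee},2)$. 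So the coefficient on that side is $\ol{z}_N(({}_{\down}(\bk*_+\bl)_{\down})^{\vee};q)\,q^N/(1-q^N)^2$.
\item The product side has only the simple pole $q^N/(1-q^N)$, times a finite sum that is regular at $q=\zeta_N$.
\item Multiplying by $q^{-N}(1-q^N)^2$ and letting $q\to\zeta_N$ gives $0$.
\end{enumerate}
This trailing $2$, together with the admissibility of $\overleftarrow{\bk}$ and $\overleftarrow{\bl}$, is where the hypotheses on $k_1,k_r,l_1,l_s$ really enter; ``nothing degenerates'' does not capture it.

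Your residue/multitangent alternative is closer in spirit to Hirose's own argument. As stated, however, it specifies neither the finite analogue of the multitangent functions nor why the left-hand side equals their total residue, so it does not close the gap either.
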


Here, \(\ast_q\) denotes a \(q\)-analogue of the harmonic product. Letting $N\to\infty$ in Theorem \ref{thm:main} yields Hirose's result and hence provides another proof of it. Similarly, setting $N=p$ and reducing modulo $\mathfrak{p}$ yields the following corollary.

\begin{Cor}
For non-empty indices $\bk=(k_1,\dots,k_r)$ and $\bl=(l_1,\dots,l_s)$ with $k_1,k_r, l_1,l_s\geq 2$, the identity
\[
\zeta_{\mathcal{A}}(({}_{\down}(\bk*\bl)_{\down})^{\vee})=0
\]
holds.
\end{Cor}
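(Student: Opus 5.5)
The plan is to specialize Theorem~\ref{thm:main} to $N=p$ prime and reduce the resulting identity in $\ZZ[\zeta_p]$ modulo $\mathfrak{p}=(1-\zeta_p)$, using the congruence $(z_p(\bk)\bmod\mathfrak{p})_p=\zeta_{\mathcal{A}}(\bk)$ of Bachmann--Takeyama--Tasaka quoted in the introduction. Both ${}_{\down}(\cdot)_{\down}$ and $(\cdot)^{\vee}$ are extended linearly to formal $\ZZ$-linear combinations of indices, as in Section~2. So $z_p$ applied to $({}_{\down}(\bk*_{\zeta_p}\bl)_{\down})^{\vee}$ is a finite $\ZZ[\zeta_p]$-linear combination of values $z_p(\boldsymbol{m})$. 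The task is therefore to compare this combination with the one obtained from the ordinary harmonic product $\bk*\bl$.

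\emph{Step 1: integrality.} For $0<n<p$, the $q$-integer $[n]_{\zeta_p}=(1-\zeta_p^n)/(1-\zeta_p)$ is a cyclotomic unit of $\ZZ[\zeta_p]$. Powers of $\zeta_p$ are also units. Hence every $z_p(\boldsymbol{m})$ lies in $\ZZ[\zeta_p]$, and reduction modulo $\mathfrak{p}$, with $\ZZ[\zeta_p]/\mathfrak{p}\cong\ZZ/p\ZZ$, makes sense termwise.

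\emph{Step 2: comparing $*_q$ with $*$.} From the definition of the $q$-harmonic product in Section~2, I expect the following form. The product $\bk*_q\bl$ equals $\bk*\bl$ plus a finite sum of terms of the form $(1-q)^{j}c\,\boldsymbol{m}$ with $j\ge1$ and $c\in\ZZ[q]$. These correction terms typically arise from merged entries $(k_i+l_j-1)$ weighted by $(1-q)$, or something similar. In other words, the coefficients lie in $\ZZ[q]$ and specialize at $q=1$ to the ordinary harmonic product. This is the point I would verify carefully, and it is the main obstacle: one must check that the coefficients of $*_q$ are genuinely polynomial in $q$ with no denominators. Such denominators would obstruct reduction modulo $\mathfrak{p}$. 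One must also check that the correction terms carry a factor of $1-q$. Since $\zeta_p\equiv1\pmod{\mathfrak{p}}$, all coefficients $c(\zeta_p)$ then reduce to $c(1)$, and the correction terms vanish modulo $\mathfrak{p}$.

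\emph{Step 3: conclusion.} The operations ${}_{\down}(\cdot)_{\down}$ and $\vee$ act on indices only, not on coefficients. Combining Steps~1 and~2 therefore gives
\[
0=z_p(({}_{\down}(\bk*_{\zeta_p}\bl)_{\down})^{\vee})\equiv z_p(({}_{\down}(\bk*\bl)_{\down})^{\vee})\pmod{\mathfrak{p}}.
\]
The first equality is Theorem~\ref{thm:main}. Applying the Bachmann--Takeyama--Tasaka congruence to each index in the right-hand side and collecting the results over all primes $p$ gives $\zeta_{\mathcal{A}}(({}_{\down}(\bk*\bl)_{\down})^{\vee})=0$ in $\prod_p\ZZ/p\ZZ\big/\bigoplus_p\ZZ/p\ZZ$. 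In fact it holds for every prime $p$, not merely for all but finitely many.
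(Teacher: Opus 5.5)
Your proposal is correct and follows the paper's own argument: specialize Theorem~\ref{thm:main} to $N=p$ and reduce modulo $\mathfrak{p}$, with the integrality and unit details that the paper leaves implicit now spelled out. The point you flag as the main obstacle in Step~2 needs no further check: by induction on $r+s$, the recursive definition of $*_q$ in Section~2 shows directly that $\bk*_q\bl-\bk*\bl$ is a combination of indices whose coefficients lie in $(1-q)\ZZ[q]$.
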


In \cite{hphpt}, Hessami Pilehrood, Hessami Pilehrood, and Tauraso conjectured the following.
\begin{Conj}[Hessami Pilehrood--Hessami Pilehrood--Tauraso, Conjecture 1.1 (i)]\label{conj:main}
Let $d_0,\dots,d_t$ be non-negative integers and $N$ be an integer greater than $r\coloneqq \sum_{j=0}^td_j+2t$. Then
\begin{align*}
\sum_{j=0}^tz_N\left(\{1\}^{d_j},2,\{1\}^{d_{j+1}},2,\dots,2,\{1\}^{d_{j+t}}\right)&=\frac{(-1)^t}{N}\binom{N+t}{r+1}(1-\zeta_N)^r.
\end{align*}
In the sum above, it is understood that $d_j=d_k$ if $j\equiv k$ modulo $t+1$. Also, $\{1\}^n$ denotes $n$ repetitions of $1$.
\end{Conj}

In the case of multiple zeta values, Tanaka and Wakabayashi \cite{tw} showed that the cyclic sum formula can be derived algebraically from Kawashima's relations. By slightly modifying their result and combining it with Theorem \ref{thm:main}, we can prove Conjecture \ref{conj:main}.

\begin{Thm}\label{thm:prove_conj}
Conjecture \ref{conj:main} is true.
\end{Thm}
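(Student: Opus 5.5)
We will prove Conjecture \ref{conj:main} by imitating the argument of Tanaka and Wakabayashi \cite{tw}, who derived the cyclic sum formula for multiple zeta values from the linear part of Kawashima's relations. Here Theorem \ref{thm:main} replaces Kawashima's relations. The main structural change is that the right-hand side is no longer zero. So the relation $z_N(({}_{\down}(\bk*_{\zeta_N}\bl)_{\down})^{\vee})=0$ alone will not suffice; we also need an independent evaluation of a "boundary" term.

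The first step is to express the cyclic sum
\[
C(d_0,\dots,d_t)\coloneqq\sum_{j=0}^t z_N(\{1\}^{d_j},2,\dots,2,\{1\}^{d_{j+t}})
\]
through Hoffman duality. The dual of an index built from $1$'s and $2$'s is an index whose entries are $\geq 2$ except possibly at the ends. Cyclic rotations of the word in $x,y$ correspond to rotations of the dual index.

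Next we choose $\bk$ and $\bl$ in Theorem \ref{thm:main} as in \cite{tw}: $\bl=(2)$ or a short index, and $\bk$ built from the dual data, all with first and last entries $\geq2$. We then expand ${}_{\down}(\bk*_{\zeta_N}\bl)_{\down}$. The $q$-harmonic product $*_{\zeta_N}$ contains extra terms of the form $(1-\zeta_N)\times(\text{lower weight})$ coming from $q^{n}/[n]^a\cdot q^{n}/[n]^b$. We track these in the Hoffman-dual word algebra. This should give a linear identity of the form
\[
C(d_0,\dots,d_t)=\sum c_{\bk'}\,(1-\zeta_N)^{m}\,z_N(\{1\}^{r-m}),
\]
plus a telescoping combination that cancels cyclically. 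The telescoping is exactly the Tanaka--Wakabayashi mechanism; the correction terms are the new feature.

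The correction terms should reduce to values $z_N(\{1\}^{m})$ and $z_N(\{1\}^a,2,\{1\}^b)$-type pieces. For these we need the known closed forms, e.g.
\[
z_N(\{1\}^m)=\frac1N\binom{N}{m+1}(1-\zeta_N)^m,
\]
which follows from $\prod_{0<n<N}(1-x\zeta_N^n/(1-\zeta_N^n)\cdots)$-type generating functions. Equivalently, it follows from the elementary symmetric functions of $\{1/(1-\zeta_N^n)\}$, which are the roots of an explicit polynomial. We then sum the resulting binomials using Vandermonde-type identities to reach $\frac{(-1)^t}{N}\binom{N+t}{r+1}(1-\zeta_N)^r$.

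We could alternatively argue by induction on $t$, with $t=0$ given by the $z_N(\{1\}^m)$ formula. The inductive step would then be the instance of Theorem \ref{thm:main} that lowers the number of $2$'s.

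The main obstacle will be the precise bookkeeping of the $q$-deformation terms in $*_{\zeta_N}$ under the operations ${}_{\down}(\cdot)_{\down}$ and $\vee$. We must check that they produce exactly $(1-\zeta_N)$ times cyclic sums of lower depth, so that induction closes. We would first verify the cases $t=1$ and small $d_j$ by direct computation to fix signs and normalizations.
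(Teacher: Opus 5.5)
There is a genuine gap. Your outline has the right ingredients: Theorem~\ref{thm:main} in place of Kawashima's relations, the closed form $z_N(\{1\}^m)=\frac1N\binom{N}{m+1}(1-\zeta_N)^m$ (your argument via the roots $1/(1-\zeta_N^n)$ of $(x-1)^N-x^N$ is fine), and a final Vandermonde summation. The shape you guess is even exactly right: $C(d_0,\dots,d_t)=(-1)^t\sum_{m=0}^{t}\binom{t}{m}(1-\zeta_N)^m z_N(\{1\}^{r-m})$.

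What is missing is the decisive step: an identity that reduces the cyclic sum to depth-one terms modulo the relations supplied by Theorem~\ref{thm:main}. You never produce it. In its place you offer a ``telescoping combination that cancels cyclically'', a choice like $\bl=(2)$, an induction on $t$ in which the $(1-\zeta_N)$-correction terms are hoped to be cyclic sums of lower depth, and possible evaluations of $z_N(\{1\}^a,2,\{1\}^b)$. None of this is established, and you yourself flag the bookkeeping as unresolved. Tanaka--Wakabayashi's mechanism is not a telescoping or a single clever product. It is a congruence modulo the whole span $F_+*F_+$ of harmonic products, and the real question is how to transport it to the $\zeta_N$-deformed product.

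The missing idea is to remove the $(1-\zeta_N)$-bookkeeping entirely by normalization. Note three facts:
\begin{itemize}
\item ${}_{\down}(\cdot)_{\down}$ lowers weight by $2$;
\item $\vee$ preserves weight;
\item each extra term of $*_{\zeta_N}$ carries one factor $(1-\zeta_N)$ per unit of lost weight.
\end{itemize}
Hence Theorem~\ref{thm:main} is equivalent to $f(\bk*_+\bl)=0$ for $\bk,\bl\in F_+$, where $f(\bk)\coloneqq\ol{z}_N(({}_{\down}\bk_{\down})^{\vee})$. Here $*_+$ is the quasi-shuffle product over $\QQ$ with letter product $(r)\circ_+(s)=(r+s)+(r+s-1)$, which is commutative and associative.

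The Tanaka--Wakabayashi congruence $\sum_{i=0}^{a-1}(k_{i+1},\dots,k_a,k_1,\dots,k_i)\equiv(-1)^{a-1}(k_1\circ\cdots\circ k_a)\pmod{F_+*F_+}$ uses nothing beyond the recursive quasi-shuffle structure and commutativity and associativity of the letter product. So it holds verbatim with $\circ_+$ and $F_+*_+F_+$. By induction, $k_1\circ_+\cdots\circ_+k_a=\sum_{j=k-a+1}^{k}\binom{a-1}{k-j}(j)$ with $k=\sum k_i$.

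Now apply $f$, using $f(j)=\ol{z}_N(\{1\}^{j-2})=\frac1N\binom{N}{j-1}$. Vandermonde's identity gives $\frac{(-1)^{a-1}}{N}\binom{N+a-1}{k-1}$ for the cyclic sum of $f$. Take $a=t+1$ and $k_i=d_{i-1}+2$, so $k=r+2$. Then the values of $f$ on the rotations are exactly $(1-\zeta_N)^{-r}$ times the terms of the conjecture.

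No induction on $t$ and no values beyond depth one are needed: all correction terms are absorbed into $\circ_+$.
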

As a consequence, we obtain both the symmetric and finite versions.

\begin{Cor}
Let $d_0,\dots,d_t$ be non-negative integers, and set $r\coloneqq \sum_{j=0}^td_j+2t$. Then
\begin{align*}
\sum_{j=0}^t\zeta_{\mathcal{RS}}\left(\{1\}^{d_j},2,\{1\}^{d_{j+1}},2,\dots,2,\{1\}^{d_{j+t}}\right)&=\frac{(-1)^t(-2\pi i)^r}{(r+1)!},\\
\sum_{j=0}^t\zeta_{\mathcal{A}}\left(\{1\}^{d_j},2,\{1\}^{d_{j+1}},2,\dots,2,\{1\}^{d_{j+t}}\right)&=0.
\end{align*}
In the sums above, it is understood that $d_j=d_k$ if $j\equiv k$ modulo $t+1$.
\end{Cor}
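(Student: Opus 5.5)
This corollary follows from Theorem \ref{thm:prove_conj} by passing to the two specializations already used for Theorem \ref{thm:main}: the limit $N\to\infty$ for the symmetric version, and $N=p$ followed by reduction modulo $\mathfrak{p}$ for the finite version. Fix $d_0,\dots,d_t$ and $r=\sum_j d_j+2t$. By Theorem \ref{thm:prove_conj}, for every $N>r$ we have
\[
\sum_{j=0}^t z_N\left(\{1\}^{d_j},2,\dots,2,\{1\}^{d_{j+t}}\right)=\frac{(-1)^t}{N}\binom{N+t}{r+1}(1-\zeta_N)^r .
\]
All the indices appearing here are admissible inputs for $z_N$, since $z_N$ is defined for every index. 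The limit and reduction statements $\lim_{N\to\infty}z_N(\bk)=\zeta_{\mathcal{RS}}(\bk)$ and $(z_p(\bk)\bmod\mathfrak{p})_p=\zeta_{\mathcal{A}}(\bk)$, quoted in the introduction from \cite{h1} and \cite{btt}, hold for arbitrary indices. It therefore only remains to evaluate the right-hand side in each specialization.

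For the symmetric version, we compute the asymptotics of the right-hand side as $N\to\infty$ with $t$ and $r$ fixed. First, $\binom{N+t}{r+1}=\frac{N^{r+1}}{(r+1)!}(1+O(1/N))$. Second, $1-\zeta_N=1-e^{2\pi i/N}=-\frac{2\pi i}{N}(1+O(1/N))$, so $(1-\zeta_N)^r=\frac{(-2\pi i)^r}{N^r}(1+O(1/N))$. Multiplying these together with the factor $(-1)^t/N$, the powers of $N$ cancel exactly, $N^{r+1}\cdot N^{-1}\cdot N^{-r}=1$. The right-hand side therefore tends to $\frac{(-1)^t(-2\pi i)^r}{(r+1)!}$. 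Taking the limit of the left-hand side term by term, using the finite sum and $\lim z_N=\zeta_{\mathcal{RS}}$, gives the first identity.

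For the finite version, we take $N=p$ prime with $p>r$. This excludes only finitely many primes, which is harmless in $\prod_p\ZZ/p\ZZ\big/\bigoplus_p\ZZ/p\ZZ$. The right-hand side is then $\frac{(-1)^t}{p}\binom{p+t}{r+1}(1-\zeta_p)^r$, and we must show it lies in $\mathfrak{p}$.

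\begin{itemize}
\item If $r\ge 1$, the binomial coefficient $\binom{p+t}{r+1}$ is the product $(p+t)(p+t-1)\cdots(p+t-r)/(r+1)!$. Since $t\le r$ (as $r\ge 2t$), the numerator runs over the range $p+t-r\le p\le p+t$, so it contains the factor $p$. Because $r+1<p$, the prime $p$ does not divide $(r+1)!$. Hence $\frac1p\binom{p+t}{r+1}$ is a rational number whose denominator is prime to $p$, and so it lies in $\ZZ_{(p)}$. Multiplying by $(1-\zeta_p)^r$ with $r\ge1$ then places the right-hand side in $\mathfrak{p}$.
\item If $r=0$, then $t=0$ and $d_0=0$. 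The index is empty, so the left-hand side is $z_p(\varnothing)=1$, while the right-hand side is $\frac1p\binom{p}{1}=1$. This case is trivial, and under the usual convention the second identity is understood for nonempty indices, so we set it aside.
\end{itemize}

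Reducing modulo $\mathfrak{p}$ and using $(z_p(\bk)\bmod\mathfrak{p})_p=\zeta_{\mathcal{A}}(\bk)$, we get $\sum_j\zeta_{\mathcal{A}}(\cdots)=0$, which is the second identity.

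The only real work is the $p$-adic integrality of $\frac1p\binom{p+t}{r+1}$, which rests on the inequalities $t\le r<p$. Everything else is a direct substitution into Theorem \ref{thm:prove_conj}.
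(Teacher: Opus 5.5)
Your proposal is correct and follows the paper's route: the corollary is read off from Theorem~\ref{thm:prove_conj} by letting $N\to\infty$, and by taking $N=p$ and reducing modulo $\mathfrak{p}$. Your asymptotics and your $p$-integrality check of $\frac1p\binom{p+t}{r+1}$ fill in details the paper leaves implicit, and you are right that the finite identity must exclude the empty index $r=0$. One small slip: $p>r$ does not give $r+1<p$, so you should take $p>r+1$. At $p=r+1$ the right-hand side is in fact $\equiv(-1)^{t+1}\not\equiv 0\pmod{\mathfrak{p}}$, but excluding that prime still discards only finitely many primes, so the conclusion stands.
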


The remainder of this paper is organized as follows. In Section 2, we prove Theorem \ref{thm:main} using the connected sum method introduced by Seki and Yamamoto \cite{sy}. In Section 3, we prove Theorem \ref{thm:prove_conj} by combining Theorem \ref{thm:main} with an algebraic identity in the quasi-shuffle algebra.
\section{Proof of Theorem \ref{thm:main}}

\subsection{Connected sums}
We use the arrow notation introduced by Seki \cite{s} and the connected sum method introduced by Seki and Yamamoto \cite{sy}. For an index $\bk\coloneqq (k_1,\dots,k_r)$, we define
\begin{align*}
\bk_{\up}&\coloneqq(k_1,\dots,k_{r-1},k_r+1),\quad \bk_{\down}\coloneqq(k_1,\dots,k_{r-1},k_r-1),\\
{}_{\up}\bk&\coloneqq(k_1+1,k_2,\dots,k_r),\quad {}_{\down}\bk\coloneqq(k_1-1,k_2,\dots,k_{r-1},k_r),\\
\bk_{\to}&\coloneqq(k_1,\dots,k_{r-1},k_r,1),\quad {}_{\lf}\bk\coloneqq(1,k_1,k_2,\dots,k_r).
\end{align*}
We now define the connector and the connected sums.
Throughout this subsection, let \(q,s,t\in\CC\) satisfy
\[
|q|<1,\qquad |s|<1,\qquad |t|<1.
\]
For non-negative integers \(n,m\), let
\begin{align*}
(a;q)_n&\coloneqq\prod_{k=0}^{n-1}(1-aq^k),\qquad(a;q)_{\infty}\coloneqq\lim_{n\to\infty}(a;q)_n,\\
C(n,m)&\coloneqq\frac{(sq;q)_n(tq;q)_m}{(stq;q)_{n+m}}t^ns^mq^{nm}.
\end{align*}
We call an index $\bk=(k_1,\dots,k_a)$ admissible if $a=0$ or $k_a\geq 2$. For indices $\bk\coloneqq (k_1,\dots,k_a), \bl\coloneqq (l_1,\dots,l_b)$, we define
\begin{align*}
\zeta_q(\bk;s,t)&\coloneqq\sum_{0<n_1<\cdots<n_a}\left(\prod_{i=1}^a\frac{(sq^{n_i})^{k_i-1}}{(1-sq^{n_i})^{k_i}}\right)\frac{(sq;q)_{n_a}}{(stq;q)_{n_a}}t^{n_a},\\
Z_{q,s,t}(\bk;\bl)&\coloneqq\sum_{\substack{0=n_0<n_1<\cdots<n_a\\0=m_0< m_1<\cdots<m_b}}\left(\prod_{i=1}^a\frac{(sq^{n_i})^{k_i-1}}{(1-sq^{n_i})^{k_i}}\right)C(n_a,m_b)\left(\prod_{i=1}^b\frac{(tq^{m_i})^{l_i-1}}{(1-tq^{m_i})^{l_i}}\right).
\end{align*}

From the definition, the symmetry

\[
Z_{q,s,t}(\bk;\bl)=Z_{q,t,s}(\bl;\bk)
\]
holds, and in particular,
\begin{align*}
Z_{q,s,t}(\bk;\varnothing)&=\zeta_q(\bk;s,t),\\
Z_{q,s,t}(\varnothing;\bk)&=\zeta_q(\bk;t,s).
\end{align*}

The following relations are called transport relations in the context of the connected sum method.
\begin{Lem}[Transport relations]
For indices $\bk$ and $\bl$, the identities
\begin{align*}
Z_{q,s,t}(\bk_{\to};\bl)&=Z_{q,s,t}(\bk;\bl_{\up}),\\
Z_{q,s,t}(\bk_{\up};\bl)&=Z_{q,s,t}(\bk;\bl_{\to})
\end{align*}
hold.
\end{Lem}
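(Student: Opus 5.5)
The plan is the standard connected sum argument. Both transport relations should reduce to a single telescoping identity for the connector $C(n,m)$, applied in the last summation variable. By the symmetry $Z_{q,s,t}(\bk;\bl)=Z_{q,t,s}(\bl;\bk)$, which swaps $s\leftrightarrow t$ and the roles of $n$ and $m$ in $C$, the second relation follows from the first. So I only treat
\[
Z_{q,s,t}(\bk_{\to};\bl)=Z_{q,s,t}(\bk;\bl_{\up}).
\]

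\textbf{Step 1 (local identity).} From the definition of $C$, compute the ratio
\[
\frac{C(n,m)}{C(n-1,m)}=\frac{(1-sq^n)\,tq^m}{1-stq^{n+m}}.
\]
This gives
\[
C(n-1,m)-C(n,m)=C(n,m)\,\frac{1-stq^{n+m}-(1-sq^n)tq^m}{(1-sq^n)tq^m}=C(n,m)\,\frac{1-tq^m}{(1-sq^n)\,tq^m},
\]
that is,
\[
\frac{C(n,m)}{1-sq^n}=\frac{tq^m}{1-tq^m}\bigl(C(n-1,m)-C(n,m)\bigr).
\]

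\textbf{Step 2 (telescoping).} Fix $n'\ge 0$ and $m\ge 0$. Summing the identity of Step 1 over $n>n'$ gives
\[
\sum_{n>n'}\frac{C(n,m)}{1-sq^n}=\frac{tq^m}{1-tq^m}\,C(n',m).
\]
The sum telescopes, and the tail vanishes because $C(n,m)\to 0$ as $n\to\infty$. Indeed $|t|<1$, and $(sq;q)_n$ and $(stq;q)_{n+m}$ converge to nonzero limits, the latter being nonzero since $|stq|<1$.

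\textbf{Step 3 (insert into $Z$).} In $Z_{q,s,t}(\bk_{\to};\bl)$ the new variable $n_{a+1}>n_a$ carries the factor $(sq^{n_{a+1}})^{0}/(1-sq^{n_{a+1}})$ and the connector $C(n_{a+1},m_b)$. Summing over $n_{a+1}$ first by Step 2, with $n'=n_a$ and $m=m_b$, replaces these by $C(n_a,m_b)\,\frac{tq^{m_b}}{1-tq^{m_b}}$. This multiplies the last $\bl$-factor $(tq^{m_b})^{l_b-1}/(1-tq^{m_b})^{l_b}$ into $(tq^{m_b})^{l_b}/(1-tq^{m_b})^{l_b+1}$, which is exactly the factor for $\bl_{\up}$. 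The same works when $\bk=\varnothing$, since then $n_a=n_0=0$.

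\textbf{Main obstacle: bookkeeping.} The analysis itself is easy: all series converge absolutely because of the factors $t^ns^m$ and $|q|<1$, so rearranging the order of summation is justified. The delicate point is the boundary case $\bl=\varnothing$, where $m_b=m_0=0$ and the extra factor is $t/(1-t)$. One must check that this matches whatever convention the paper adopts for $\varnothing_{\up}$ (presumably the factor $t/(1-t)$ attached at $m_0=0$, or the case is simply excluded). I would state the relation for nonempty $\bl$ and remark on this convention separately.
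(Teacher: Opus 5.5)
Your proposal is correct and follows the paper's proof: you reduce the second relation to the first via the symmetry $Z_{q,s,t}(\bk;\bl)=Z_{q,t,s}(\bl;\bk)$, then sum the telescoping identity $\frac{C(n,m)}{1-sq^n}=\frac{tq^m}{1-tq^m}\bigl(C(n-1,m)-C(n,m)\bigr)$ over the new last variable and use $C(n,m)\to 0$. On the boundary case, no extra convention is needed: each relation is only meaningful where the up-arrow is defined, i.e.\ $\bl\neq\varnothing$ in the first relation and $\bk\neq\varnothing$ in the second, which is exactly how you propose to state it.
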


\begin{proof}
By symmetry, it suffices to prove the following identity
\[
\sum_{n<k}\frac{1}{1-sq^k}C(k,m)=C(n,m)\frac{tq^m}{1-tq^m}.
\]
This follows from the telescoping identity
\begin{align*}
\sum_{n<k}\frac 1{1-sq^k}C(k,m)&=\sum_{n<k}(C(k-1,m)-C(k,m))\frac{tq^m}{1-tq^m}\\
&=C(n,m)\frac{tq^m}{1-tq^m}.
\end{align*}
The final equality uses the fact that $\lim_{n\to \infty}C(n,m)=0$.
\end{proof}
Every admissible index $\bk$ can be written uniquely in the form
\[
\bk=(\{1\}^{a_1-1},b_1+1,\dots,\{1\}^{a_s-1},b_s+1)
\]
where $a_1,\dots,a_s,b_1,\dots,b_s$ are positive integers. The dual index of $\bk$ is defined by
\[
\bk^{\dagger}\coloneqq(\{1\}^{b_s-1},a_s+1,\dots,\{1\}^{b_1-1},a_1+1).
\]

For an admissible index $\bk\coloneqq (k_1,\dots,k_a)$, applying the transport relations $k_1+\cdots+k_a$ times yields the following proposition.

\begin{Prop}[Duality formula]\label{prop:duality}
For an admissible index $\bk$, the identity
\[
\zeta_q(\bk;s,t)=\zeta_q(\bk^{\dagger};t,s)
\]
holds.
\end{Prop}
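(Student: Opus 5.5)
The plan is to interpolate between the two sides with the connected sum $Z_{q,s,t}$, moving one unit of weight at a time from the left slot to the right slot using the transport relations. Write the admissible index as
\[
\bk=(\{1\}^{a_1-1},b_1+1,\dots,\{1\}^{a_s-1},b_s+1).
\]
Then $\zeta_q(\bk;s,t)=Z_{q,s,t}(\bk;\varnothing)$ and $\zeta_q(\bk^\dagger;t,s)=Z_{q,s,t}(\varnothing;\bk^\dagger)$. It therefore suffices to connect $Z_{q,s,t}(\bk;\varnothing)$ to $Z_{q,s,t}(\varnothing;\bk^\dagger)$ through a chain of equalities.

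\textbf{Stripping the last block.} We peel $\bk$ from the right, one step at a time.
\begin{itemize}
\item The last entry of $\bk$ is $b_s+1$. Use the relation $Z(\bk'_{\up};\bl)=Z(\bk';\bl_{\to})$ repeatedly, lowering this entry one unit at a time until it becomes $1$. Each step appends a $1$ to the right index. Starting from $\bl=\varnothing$, this takes $b_s$ steps and produces $\bl=(\{1\}^{b_s})$.
\item Now the left index ends in a $1$, namely the last of the $a_s$ ones (counting the reduced entry). Use $Z(\bk'_{\to};\bl)=Z(\bk';\bl_{\up})$ to delete this trailing $1$, which raises the last entry of $\bl$.
\item Repeat this deletion for each of the $a_s$ trailing ones. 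The right index becomes $(\{1\}^{b_s-1},a_s+1)$.
\end{itemize}

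\textbf{Remaining blocks.} The left index now ends in $b_{s-1}+1$, and we continue in the same way. Lowering this entry appends $b_{s-1}$ ones to the right index, and deleting the $a_{s-1}$ ones then raises the newly appended last entry to $a_{s-1}+1$. Proceeding through all blocks yields
\[
Z_{q,s,t}(\varnothing;(\{1\}^{b_s-1},a_s+1,\dots,\{1\}^{b_1-1},a_1+1))=Z_{q,s,t}(\varnothing;\bk^\dagger).
\]
In total there are $\sum_i(a_i+b_i)=\mathrm{wt}(\bk)$ steps, matching the count stated before the proposition.

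\textbf{Bookkeeping.} We must check that every intermediate index lies in the domain where the transport lemma applies. The lemma is stated for arbitrary indices, including the empty one and non-admissible ones. The one point needing care is the step where the left index becomes empty while $\bl$ is being built up. We should confirm that the lemma with $\bk=\varnothing$ still holds. For this, interpret $n_a=n_0=0$; then the telescoping identity
\[
\sum_{k>0}\frac{1}{1-sq^k}C(k,m)=C(0,m)\frac{tq^m}{1-tq^m}
\]
is exactly the case $n=0$ of the lemma, so it is covered.

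\textbf{Main obstacle.} We need all the series $Z_{q,s,t}$ appearing along the way to converge absolutely for $|q|,|s|,|t|<1$. This makes the rearrangements in the lemma legitimate for non-admissible intermediate indices. Convergence follows since $C(n,m)$ carries the factor $t^ns^mq^{nm}$, which decays geometrically in both $n$ and $m$, while the other factors are bounded.
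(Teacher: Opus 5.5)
Your proposal is correct and follows the paper's route: the paper proves the duality formula in one line, by applying the transport relations $k_1+\cdots+k_a$ times to pass from $Z_{q,s,t}(\bk;\varnothing)$ to $Z_{q,s,t}(\varnothing;\bk^{\dagger})$, and your block-by-block chain (lower $b_i+1$ to $1$, then delete the $a_i$ trailing ones) makes that sequence explicit. Your supporting checks are the right ones. These are the $n_0=0$ case of the telescoping identity, which also justifies the first lowering steps while the right index is still empty (via the $s\leftrightarrow t$ symmetry), and absolute convergence for $|q|,|s|,|t|<1$.
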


\subsection{Algebraic setup}

We define the harmonic product \(\ast\) recursively as a bilinear
product on $\oplus_{\bk:\mathrm{index}}\QQ\bk$ by
\begin{align*}
\bk*\varnothing&=\varnothing*\bk=\bk,\\
\bk*\bl&=(k_1,{}_{-}\bk*\bl)+(l_1,\bk*{}_{-}\bl)+(k_1+l_1,{}_{-}\bk*{}_{-}\bl),\quad r\geq 1,s\geq 1
\end{align*}
where ${}_{-}\bk\coloneqq(k_2,\dots,k_{r})$. Similarly, the $q$-harmonic product is defined by
\begin{align*}
\bk*_q\varnothing&=\varnothing*_q\bk=\bk,\\
\bk*_q\bl&=(k_1,{}_{-}\bk*_q\bl)+(l_1,\bk*_q{}_{-}\bl)\\
&\quad +(k_1+l_1,{}_{-}\bk*_q{}_{-}\bl)+(1-q)(k_1+l_1-1,{}_{-}\bk*_q{}_{-}\bl),\quad r\geq 1,s\geq 1.
\end{align*}
We put $*_+\coloneqq *_0$, and define
\[
\ol{z}_N(\bk;q)\coloneqq(1-q)^{-\wt(\bk)}z_N(\bk;q), \ol{z}_N(\bk)\coloneqq(1-\zeta_N)^{-\wt(\bk)}z_N(\bk).
\]
For any index $\bk\coloneqq(k_1,\dots,k_r)$, we write
\[
\overleftarrow{\bk}\coloneqq(k_r,k_{r-1},\dots,k_1)
\]
and define the Hoffman dual by 
\[
\bk^{\vee}\coloneqq\overleftarrow{((\bk_{\up})^{\dagger})_{\down}}.
\]
For an admissible index \(\bk\), the defining series
\(\zeta_q(\bk;s,t)\) converges absolutely also at the
boundary specializations \(s=1\) or \(t=1\). Therefore,
Proposition 2.2 extends to these specializations by continuity. For an admissible index $\bk\coloneqq (k_1,\dots,k_a)$, we define
\begin{align*}
\zeta_q(\bk;t)&\coloneqq\zeta_q(\bk;t,1)=\sum_{0<n_1<\cdots<n_a}\prod_{i=1}^a\frac{(tq^{n_i})^{k_i-1}}{(1-tq^{n_i})^{k_i}}\\
\widetilde{\zeta}_q(\bk;t)&\coloneqq\zeta_q(\bk;1,t)=\sum_{0<n_1<\cdots<n_a}\left(\prod_{i=1}^a\frac{q^{n_i(k_i-1)}}{(1-q^{n_i})^{k_i}}\right)\frac{(q;q)_{n_a}}{(tq;q)_{n_a}}t^{n_a}.
\end{align*}

Then $\zeta_q(\bk;t)$ satisfies an analogue of the harmonic product:
\[
\zeta_q(\bk;t)\zeta_q(\bl;t)=\zeta_q(\bk*_+\bl;t),
\]
because
\[
\frac{(tq^n)^{k-1}}{(1-tq^n)^k}\cdot\frac{(tq^n)^{l-1}}{(1-tq^n)^l}=\frac{(tq^n)^{k+l-1}}{(1-tq^n)^{k+l}}+\frac{(tq^n)^{k+l-2}}{(1-tq^n)^{k+l-1}}.
\]
By Proposition 2.2 and the preceding continuity argument, we have
\[
\widetilde{\zeta}_q(\bk;t)
 =\zeta_q(\bk^{\dagger};t).
\]
These identities yield the following corollary.
\begin{Cor}\label{cor:product}
For admissible indices $\bk,\bl$,
\[
\widetilde{\zeta}_q(\bk;t)\widetilde{\zeta}_q(\bl;t)=\widetilde{\zeta}_q((\bk^{\dagger}*_+\bl^{\dagger})^{\dagger};t)
\]
holds.
\end{Cor}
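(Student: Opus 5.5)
The corollary follows by combining the duality $\widetilde{\zeta}_q(\bk;t)=\zeta_q(\bk^{\dagger};t)$ with the harmonic product formula for $\zeta_q(\cdot;t)$. Two points need care: $\bk^{\dagger}*_+\bl^{\dagger}$ must be a linear combination of admissible indices, and $\dagger$ must be extended linearly.

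First, I check admissibility. If $\bk$ and $\bl$ are admissible, so are $\bk^{\dagger}$ and $\bl^{\dagger}$. Each has last entry $a_1+1\geq 2$, or is empty. Unwinding the recursive definition of $*_+$ from the right shows that every index in $\bk^{\dagger}*_+\bl^{\dagger}$ has one of three possible last entries:
\begin{itemize}
\item the last entry of $\bk^{\dagger}$;
\item the last entry of $\bl^{\dagger}$;
\item the sum of these two, or that sum minus $1$ (from the $(1-q)$-term at $q=0$, which has coefficient $1$).
\end{itemize}
The sum minus $1$ is at least $2+2-1=3$. Hence every term is admissible. The recursion is written from the left, so I would either prove a right-recursive form of $*_+$ by induction, or argue directly that the last letter arises from the last letters of the factors, since $*_+$ is the usual quasi-shuffle with the deformed letter product $[k]\circ[l]=[k+l]+[k+l-1]$. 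Note that $\dagger$ is then applied to a linear combination of admissible indices, and I extend it $\QQ$-linearly; likewise $\widetilde{\zeta}_q(\cdot;t)$ and $\zeta_q(\cdot;t)$ extend linearly.

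Then the computation is direct:
\[
\widetilde{\zeta}_q(\bk;t)\widetilde{\zeta}_q(\bl;t)=\zeta_q(\bk^{\dagger};t)\zeta_q(\bl^{\dagger};t)=\zeta_q(\bk^{\dagger}*_+\bl^{\dagger};t).
\]
The first equality is the duality, valid at the boundary $s=1$ by the continuity remark. The second is the harmonic product for $\zeta_q(\cdot;t)$; it holds because all the series involved converge absolutely for admissible indices, so one may multiply the two nested sums and split the product region into the three orderings. Finally, applying duality once more to each admissible term $\bm$ of $\bk^{\dagger}*_+\bl^{\dagger}$ gives $\zeta_q(\bm;t)=\widetilde{\zeta}_q(\bm^{\dagger};t)$, because $\dagger$ is an involution on admissible indices. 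This yields $\widetilde{\zeta}_q((\bk^{\dagger}*_+\bl^{\dagger})^{\dagger};t)$.

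The main obstacle is the admissibility and convergence bookkeeping rather than any identity. One must confirm that $(\bm^{\dagger})^{\dagger}=\bm$, which is clear from the definition by reversing the $(a_i,b_i)$ blocks. One must also confirm that the harmonic-product rearrangement is justified for $|q|<1$ and $|t|<1$. This holds by absolute convergence, since $\frac{(tq^n)^{k-1}}{(1-tq^n)^k}=O(|q|^{n(k-1)})$ with $k\geq 2$ in the last slot. It also requires $t^n q^n\neq 1$, which is automatic since $|tq^n|<1$.
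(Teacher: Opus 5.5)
Your proposal is correct and follows the paper's own argument. You convert via the duality $\widetilde{\zeta}_q(\bk;t)=\zeta_q(\bk^{\dagger};t)$ (extended to $s=1$ by continuity), multiply using the $*_+$-harmonic product for $\zeta_q(\cdot;t)$, and dualize back; your checks that every term of $\bk^{\dagger}*_+\bl^{\dagger}$ is admissible and that $\dagger$ is an involution supply details the paper leaves implicit.
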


\begin{Lem}\label{lem:product}
Let \(n,m\) be positive integers, and assume that
\(|q|<1\) and \(|t|<1\). Then
\[
\sum_{n+m\leq r}\frac{(q;q)_{r-n-1}(q;q)_{r-m-1}}{(q;q)_r(q;q)_{r-n-m}}q^r\frac{(q;q)_r}{(tq;q)_r}t^r=\frac{q^{n+m}}{(1-q^n)(1-q^m)}\frac{(q;q)_n(q;q)_m}{(tq;q)_n(tq;q)_m}t^{n+m}
\]
holds.
\end{Lem}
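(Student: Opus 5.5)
The plan is to reduce the sum to a single basic hypergeometric series and evaluate it with the $q$-Gauss summation. The key observation is that after the shift $r=n+m+j$ the $t$-dependence turns into a ${}_2\phi_1$ whose argument is exactly the $q$-Gauss argument.

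\emph{Step 1: reindex.} First cancel the factor $(q;q)_r$ in the summand, so the term becomes
\[
\frac{(q;q)_{r-n-1}(q;q)_{r-m-1}}{(q;q)_{r-n-m}(tq;q)_r}(tq)^r .
\]
Then substitute $r=n+m+j$ with $j\ge 0$. Using $(x;q)_{N+j}=(x;q)_N(xq^N;q)_j$, we get
\[
(q;q)_{m+j-1}=(q;q)_{m-1}(q^m;q)_j,\qquad (q;q)_{n+j-1}=(q;q)_{n-1}(q^n;q)_j,\qquad (tq;q)_{n+m+j}=(tq;q)_{n+m}(tq^{n+m+1};q)_j .
\]
Hence the left-hand side equals
\[
\frac{(q;q)_{m-1}(q;q)_{n-1}}{(tq;q)_{n+m}}(tq)^{n+m}\sum_{j\ge 0}\frac{(q^n;q)_j(q^m;q)_j}{(q;q)_j(tq^{n+m+1};q)_j}(tq)^j
={}\frac{(q;q)_{m-1}(q;q)_{n-1}}{(tq;q)_{n+m}}(tq)^{n+m}\,{}_2\phi_1\!\left(q^n,q^m;tq^{n+m+1};q,tq\right).
\]

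\emph{Step 2: apply $q$-Gauss.} With $a=q^n$, $b=q^m$ and $c=tq^{n+m+1}$ we have $c/(ab)=tq$. The argument is therefore exactly the $q$-Gauss argument, and the series converges because $|tq|<1$. The $q$-Gauss sum gives
\[
{}_2\phi_1(a,b;c;q,c/ab)=\frac{(c/a;q)_\infty(c/b;q)_\infty}{(c;q)_\infty(c/ab;q)_\infty}
=\frac{(tq^{m+1};q)_\infty(tq^{n+1};q)_\infty}{(tq^{n+m+1};q)_\infty(tq;q)_\infty}.
\]

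\emph{Step 3: simplify.} Rewrite each ratio of infinite products as a finite one:
\[
\frac{(tq^{m+1};q)_\infty}{(tq;q)_\infty}=\frac1{(tq;q)_m},\qquad
\frac{(tq^{n+1};q)_\infty}{(tq^{n+m+1};q)_\infty}=(tq^{n+1};q)_m=\frac{(tq;q)_{n+m}}{(tq;q)_n}.
\]
The factor $(tq;q)_{n+m}$ then cancels against the prefactor. Finally, use $(q;q)_{n-1}=(q;q)_n/(1-q^n)$ and the analogous identity for $m$. The result is
\[
\frac{q^{n+m}}{(1-q^n)(1-q^m)}\frac{(q;q)_n(q;q)_m}{(tq;q)_n(tq;q)_m}t^{n+m},
\]
which is the right-hand side.

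The only real difficulty is recognizing in Step 1 that the $t$-dependence reduces to this particular ${}_2\phi_1$. After that the proof is bookkeeping with $q$-Pochhammer symbols. If one prefers not to quote $q$-Gauss, it can be replaced by the $q$-Chu--Vandermonde identity applied coefficientwise in $t$. Alternatively one can use an induction on $m$ driven by the telescoping relation $f(k-1)-f(k)=\frac{1-t}{t}\frac{f(k)}{1-q^k}$, where $f(k)=(q;q)_kt^k/(tq;q)_k$. That relation is the $s=1$, $m=0$ case of the transport identity used above.
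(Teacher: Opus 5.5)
Your proof is correct and matches the paper's argument: the shift $r=n+m+j$ gives the same series $\sum_{j\ge0}\frac{(q^n;q)_j(q^m;q)_j}{(q;q)_j(tq^{n+m+1};q)_j}(tq)^j$, which the paper evaluates by Heine's summation (the $q$-Gauss sum you quote) and then reduces to $(tq;q)_{n+m}/((tq;q)_n(tq;q)_m)$, exactly as you do. The alternative routes in your last paragraph are not needed, and the coefficientwise $q$-Chu--Vandermonde remark would take more work than you indicate, since extracting coefficients of $t$ leads to double sums rather than a single Chu--Vandermonde sum.
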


\begin{proof}
A direct calculation gives
\begin{align*}
&\sum_{n+m\leq r}\frac{(q;q)_{r-n-1}(q;q)_{r-m-1}}{(q;q)_r(q;q)_{r-n-m}}q^r\frac{(q;q)_r}{(tq;q)_r}t^r\\
&=\sum_{0\leq r}\frac{(q;q)_{r+m-1}(q;q)_{r+n-1}}{(tq;q)_{r+n+m}(q;q)_{r}}(tq)^{r+n+m}\\
&=(tq)^{n+m}\frac{(q;q)_{m-1}(q;q)_{n-1}}{(tq;q)_{n+m}}\sum_{0\leq r}\frac{(q^m;q)_r(q^n;q)_r}{(tq^{n+m+1};q)_r(q;q)_{r}}(tq)^r.
\end{align*}
By Heine's summation formula (see Gasper--Rahman \cite[\S 1.5]{gr})
\begin{align*}
\sum_{0\leq r}\frac{(q^m;q)_r(q^n;q)_r}{(tq^{n+m+1};q)_r(q;q)_{r}}(tq)^r&=\frac{(tq^{n+1};q)_{\infty}(tq^{m+1};q)_{\infty}}{(tq^{n+m+1};q)_{\infty}(tq;q)_{\infty}}\\
&=\frac{(tq;q)_{n+m}}{(tq;q)_n(tq;q)_m}.
\end{align*}
Substituting this identity proves the lemma.
\end{proof}

Theorem \ref{thm:main} can be rewritten as follows.
\begin{Thm}\label{thm:main0}
For non-empty indices $\bk=(k_1,\dots,k_r)$ and $\bl=(l_1,\dots,l_s)$ with $k_1,k_r, l_1,l_s\geq 2$, the identity
\[
\ol{z}_N(({}_{\down}(\bk*_{+}\bl)_{\down})^{\vee})=0
\]
holds.
\end{Thm}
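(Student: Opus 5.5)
The plan is to realize $\ol{z}_N$ as a degeneration of $\widetilde{\zeta}_q(\,\cdot\,;t)$, then read off the vanishing from Corollary \ref{cor:product} and Lemma \ref{lem:product}. It is intended, not verified, that the vanishing comes out as described.

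\textbf{Step 1: specialization.} Put $t=q^N$ and let $q\to\zeta_N$ radially from inside the unit disc. In $\widetilde{\zeta}_q(\bk;q^N)$ the factor
\[
\frac{(q;q)_{n_a}}{(q^{N+1};q)_{n_a}}\,q^{Nn_a}
\]
tends to $1$ for $n_a<N$. For $n_a\geq N$ it contains the vanishing factor $1-q^N$ in the numerator $(q;q)_{n_a}$. I therefore expect
\[
\lim_{q\to\zeta_N}\widetilde{\zeta}_q(\bk;q^N)=\ol{z}_N(\bk),
\]
since $q^{n(k-1)}/(1-q^n)^k$ becomes exactly the summand of $\ol{z}_N$. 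I would first prove this truncation for indices of the shape needed below, and justify exchanging limit and sum by the uniform absolute convergence noted before the definition of $\widetilde{\zeta}_q$.

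\textbf{Step 2: index bookkeeping.} Write the target index through $\dagger$:
\[
\bk^\vee=\overleftarrow{((\bk_\up)^\dagger)_\down}.
\]
I will also use that $\ol{z}_N$ is invariant under reversal of the index, via $n\mapsto N-n$ and $\zeta_N^{-1}$, which is the step where we are at a root of unity. Together these should rewrite $({}_{\down}(\bk*_+\bl)_{\down})^{\vee}$ as a $\dagger$-dual of an index whose last entry is a sum of two weights. The conditions $k_1,k_r,l_1,l_s\geq 2$ ensure that the relevant $\bk^\dagger$, $\bl^\dagger$ and their arrow-modifications are admissible. Then Corollary \ref{cor:product} identifies the $\widetilde{\zeta}_q$-value of this combination with a product-type sum. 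In that sum the outermost summation variables $n$ and $m$ of the two factors carry weights $q^n/(1-q^n)$ and $q^m/(1-q^m)$, with tails $\frac{(q;q)_n}{(tq;q)_n}t^n$. This is exactly the right-hand side of Lemma \ref{lem:product}.

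\textbf{Step 3: vanishing.} Apply Lemma \ref{lem:product} to rewrite this as one sum over $r\geq n+m$ with kernel
\[
\frac{(q;q)_{r-n-1}(q;q)_{r-m-1}}{(q;q)_{r-n-m}}\,q^r,
\]
and then specialize as in Step 1. The point is that the ${}_{\down}$ and $\down$ on the two ends remove one power of $q^n/(1-q^n)$ and of $q^m/(1-q^m)$. The $\ol{z}_N$ in the statement then corresponds to the left side of the lemma with the extra factor $(1-q^n)(1-q^m)$ absorbed. After the limit, that factor should make the $\widetilde{\zeta}$ side equal a bounded sum multiplied by a factor vanishing at $q=\zeta_N$, coming from $(q;q)_r$ with $r\geq N$ or $1-q^N$. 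This would give the theorem.

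\textbf{Main obstacle.} I expect the hardest part to be the limit $q\to\zeta_N$. Terms with $n_i\equiv 0 \pmod N$ produce singular denominators $1-q^{n_i}$, and these must be balanced against the zeros of $(q;q)_{n_a}$. Interchanging the limit with the infinite sum over $r$ in Lemma \ref{lem:product} is equally delicate. I would first try to show that every term with some $n_i\geq N$ is $O(|1-q^N|)$ uniformly. Failing that, I would work with the $\zeta_q(\cdot;t)$ side via Proposition \ref{prop:duality}, where the denominators are $1-tq^{n}$ and the truncation at $t=q^{-N}$ may be cleaner.
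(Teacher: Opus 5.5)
\textbf{There is a genuine gap.} Your Step 1 is false, and Steps 2 and 3 depend on it. With $t=q^N$ the tail factor is
\[
\frac{(q;q)_{n}}{(q^{N+1};q)_{n}}\,q^{Nn}=q^{Nn}\prod_{j=1}^{n}\frac{1-q^{j}}{1-q^{N+j}}.
\]
The denominator vanishes at $\zeta_N$ to exactly the same order as the numerator. A factor with $j=mN$ tends to $m/(m+1)$, and every other factor tends to $1$. So for $n\ge N$ the tail tends to $1/(\lfloor n/N\rfloor+1)$, not to $0$. Nothing therefore cancels the poles $(1-q^{n_i})^{-k_i}$ at $n_i\equiv 0\pmod N$. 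Already for $\bk=(2)$, the $n=N$ term alone behaves like $\tfrac12(1-q^N)^{-2}$. So $\lim_{q\to\zeta_N}\widetilde{\zeta}_q(\bk;q^N)=\ol{z}_N(\bk)$ fails.

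The convergence remark you cite does not help. It concerns $s=1$ or $t=1$ at fixed $|q|<1$, not uniformity as $|q|\to1$. The same objection applies to Step 3, which ``specializes as in Step 1'' an infinite sum over $r$. (Also, $\ol z_N$ is not reversal-invariant: $\ol z_N(\overleftarrow{\bk})=\overline{\ol z_N(\bk)}$. This is harmless for a vanishing statement but unnecessary.)

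The missing idea is to never specialize $t$ analytically. Treat both sides of $\widetilde{\zeta}_q(\bk';t)\widetilde{\zeta}_q(\bl';t)=\widetilde{\zeta}_q((\overleftarrow{\bk}*_+\overleftarrow{\bl})^{\dagger};t)$, with $\bk'=(\overleftarrow{\bk})^\dagger$ and $\bl'=(\overleftarrow{\bl})^\dagger$, as elements of $\CC[[q]][[t]]$. Expand them in the topological basis $\{(q;q)_r t^r/(tq;q)_r\}_{r\ge0}$; Lemma \ref{lem:product} does this for the product side. Then compare only the coefficient at $r=N$.

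On the $\widetilde{\zeta}_q$ side this coefficient is exactly the finite sum of terms with $n_a=N$. To identify it with the target, you need the precise combinatorial identity
\[
(\overleftarrow{\bk}*_+\overleftarrow{\bl})^{\dagger}=(({}_{\down}(\bk*_+\bl)_{\down})^{\vee},2).
\]
This is where $k_r,l_s\ge2$ enters, together with the fact that reversal commutes with $*_+$ and $\vee$. Your ``index whose last entry is a sum of two weights'' is not this. The appended $2$ at $n_a=N$ contributes $q^N(1-q^N)^{-2}$, so the left coefficient is $\ol{z}_N(({}_{\down}(\bk*_+\bl)_{\down})^{\vee};q)\,q^N(1-q^N)^{-2}$, which has a double pole at $\zeta_N$.

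The right coefficient is a finite sum with $n+m\le N$. There the Pochhammers $(q;q)_{N-n-1}$, $(q;q)_{N-m-1}$, $(q;q)_{N-n-m}$ are regular at $\zeta_N$, and only $1/(q;q)_N=1/((q;q)_{N-1}(1-q^N))$ is singular, giving a simple pole. Multiplying both sides by $q^{-N}(1-q^N)^2$ and letting $q\to\zeta_N$ yields $0$. No limit ever has to be interchanged with an infinite sum. The ``factor vanishing at $q=\zeta_N$'' you were hoping for is exactly this mismatch in pole order, and it only becomes visible after the coefficient extraction.
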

\begin{proof}
Let $(k_1',\dots,k_{a'}')=\bk'\coloneqq(\overleftarrow{\bk})^{\dagger}$ and $(l_1',\dots,l_{b'}')=\bl'\coloneqq(\overleftarrow{\bl})^{\dagger}$. By Corollary \ref{cor:product} and Lemma \ref{lem:product}, we obtain
\begin{align*}
&\widetilde{\zeta}_q((\overleftarrow{\bk}*_+\overleftarrow{\bl})^{\dagger};t)\\
&=\widetilde{\zeta}_q(\bk';t)\widetilde{\zeta}_q(\bl';t)\\
&=\sum_{\substack{0<n_1<\cdots<n_{a'}\\0<m_1<\cdots<m_{b'}\\n_{a'}+m_{b'}\leq r}}\left(\prod_{i=1}^{a'}\frac{q^{n_i(k_i'-1)}}{(1-q^{n_i})^{k_i'}}\right)\left(\prod_{i=1}^{b'}\frac{q^{m_i(l_i'-1)}}{(1-q^{m_i})^{l_i'}}\right)\\
&\qquad\cdot\frac{1-q^{n_{a'}}}{q^{n_{a'}}}\frac{1-q^{m_{b'}}}{q^{m_{b'}}}\frac{(q;q)_{r-n_{a'}-1}(q;q)_{r-m_{b'}-1}}{(q;q)_r(q;q)_{r-n_{a'}-m_{b'}}}q^r\frac{(q;q)_r}{(tq;q)_r}t^r.
\end{align*}
The identity
\[
\frac{(q;q)_r}{(tq;q)_r}t^r=(q;q)_r t^r+O(t^{r+1})
\]
together with the fact that \((q;q)_r\) is a unit in
\(\CC[[q]]\) shows that the family
\[
\left\{\frac{(q;q)_r}{(tq;q)_r}t^r\right\}_{r\geq0}
\]
forms a topological \(\CC[[q]]\)-basis of
\(\CC[[q]][[t]]\). Hence the expansion in this basis is unique. Using the algebraic identity
\begin{align*}
(\overleftarrow{\bk}*_+\overleftarrow{\bl})^{\dagger}&=(\overleftarrow{({}_{\down}(\overleftarrow{\bk}*_+\overleftarrow{\bl})_{\down})^{\vee}},2)\\
&=(({}_{\down}(\bk*_+\bl)_{\down})^{\vee},2)
\end{align*}
and comparing the coefficient of
\[
\frac{(q;q)_N}{(tq;q)_N}t^N,
\]
we obtain
\begin{align*}
&\sum_{\substack{0<n_1<\cdots<n_{a'}\\0<m_1<\cdots<m_{b'}\\n_{a'}+m_{b'}\leq N}}\left(\prod_{i=1}^{a'}\frac{q^{n_i(k_i'-1)}}{(1-q^{n_i})^{k_i'}}\right)\left(\prod_{i=1}^{b'}\frac{q^{m_i(l_i'-1)}}{(1-q^{m_i})^{l_i'}}\right)\\
&\qquad\cdot\frac{1-q^{n_{a'}}}{q^{n_{a'}}}\frac{1-q^{m_{b'}}}{q^{m_{b'}}}\frac{(q;q)_{N-n_{a'}-1}(q;q)_{N-m_{b'}-1}}{(q;q)_{N-1}(q;q)_{N-n_{a'}-m_{b'}}}\frac{q^N}{1-q^N}\\
&=\ol{z}_N(({}_{\down}(\bk*_+\bl)_{\down})^{\vee};q)\frac{q^N}{(1-q^N)^2}.
\end{align*}
Multiplying both sides by $q^{-N}(1-q^N)^2$ and taking the limit $q\to \zeta_N$, we obtain
\begin{align*}
&\ol{z}_N(({}_{\down}(\bk*_+\bl)_{\down})^{\vee})\\
&=\lim_{q\to \zeta_N}(1-q^N)\sum_{\substack{0<n_1<\cdots<n_{a'}\\0<m_1<\cdots<m_{b'}\\n_{a'}+m_{b'}\leq N}}\left(\prod_{i=1}^{a'}\frac{q^{n_i(k_i'-1)}}{(1-q^{n_i})^{k_i'}}\right)\left(\prod_{i=1}^{b'}\frac{q^{m_i(l_i'-1)}}{(1-q^{m_i})^{l_i'}}\right)\\
&\qquad\cdot\frac{1-q^{n_{a'}}}{q^{n_{a'}}}\frac{1-q^{m_{b'}}}{q^{m_{b'}}}\frac{(q;q)_{N-n_{a'}-1}(q;q)_{N-m_{b'}-1}}{(q;q)_{N-1}(q;q)_{N-n_{a'}-m_{b'}}}\\
&=0.
\end{align*}
This completes the proof.
\end{proof}

\section{Proof of Theorem \ref{thm:prove_conj}}

Let $F_+\coloneqq\bigoplus_{1\leq n, 2\leq k_1,\dots,k_n}\QQ(k_1,\dots,k_n)$. Tanaka and Wakabayashi proved an algebraic identity for cyclic sums.
We use the following special case.
\begin{Lem}[Tanaka--Wakabayashi {\cite[Proposition 2.6]{tw}}]\label{lem:1}
Let $(k_1,\dots,k_a)\in F_+$. Then
\[
\sum_{i=0}^{a-1}(k_{i+1},\dots,k_a,k_1,\dots,k_i)\equiv (-1)^{a-1}(k_1+\cdots+k_a)\pmod{F_+*F_+}.
\]
\end{Lem}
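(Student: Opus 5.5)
We propose to prove the congruence by induction on the depth $a$, working in $F_+$ modulo the subspace $F_+*F_+$ spanned by harmonic products $\boldsymbol{u}*\boldsymbol{v}$ with $\boldsymbol{u},\boldsymbol{v}$ non-empty indices in $F_+$. Write $C(k_1,\dots,k_a)$ for the cyclic sum on the left-hand side. Note first that $F_+$ is closed under $*$, since every entry of $\boldsymbol{u}*\boldsymbol{v}$ is either an entry of $\boldsymbol{u}$ or $\boldsymbol{v}$, or a sum $u_i+v_j\ge 4$. Hence all contracted words that occur below again lie in $F_+$. For $a=1$ the statement is trivial. For $a=2$ it follows from the single product
\[
(k_1)*(k_2)=(k_1,k_2)+(k_2,k_1)+(k_1+k_2),
\]
which gives $C(k_1,k_2)\equiv -(k_1+k_2)$.

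For general $a$, the plan is to form, for each cyclic starting point $j$, the alternating sum
\[
P_j\coloneqq\sum_{i=1}^{a-1}(-1)^{i-1}\,(k_{j+i},k_{j+i-1},\dots,k_{j+1})*(k_{j+i+1},\dots,k_{j+a}),
\]
with indices read modulo $a$. Each $P_j$ lies in $F_+*F_+$, so $\sum_j P_j\equiv 0$. We then expand each product using the recursive definition of $*$ (equivalently, as sums over surjections). The reason for using reversed left factors with alternating signs is the usual antipode-type cancellation in the quasi-shuffle algebra. In $P_j$ the terms telescope, and what survives is the following:
\begin{itemize}
\item the words where the whole block $k_{j+1},\dots,k_{j+a}$ appears in order, namely the rotations of $(k_1,\dots,k_a)$, with a sign pattern that we expect to reduce to a single rotation with sign $\pm1$;
\item words in which two cyclically adjacent entries have been merged, i.e.\ words of depth $a-1$ of the form $(\dots,k_i+k_{i+1},\dots)$, together with deeper contractions.
\end{itemize}
Summing over $j$, the first group should produce $\pm C(k_1,\dots,k_a)$. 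The second group should reorganize into cyclic sums $C$ of the contracted indices of depth $a-1$, $a-2$, and so on.

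We then apply the induction hypothesis to each contracted cyclic sum. Each such sum is congruent to $(-1)^{a-2}(k_1+\cdots+k_a)$, and similarly for deeper contractions. Counting how many contracted cyclic classes of each depth occur, and with which signs, the total coefficient of the depth-one word $(k_1+\cdots+k_a)$ should collapse to $(-1)^{a-1}$. This is the same mechanism as the $a=2$ case, where one contraction term appears with coefficient $-1$.

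The main obstacle is the bookkeeping in the second step. One has to verify that the middle insertions (words that are neither rotations nor contractions of adjacent cyclic entries) cancel exactly in $\sum_j P_j$. One also has to confirm that contractions group into full cyclic orbits, so that the induction hypothesis applies. Attention is needed when an orbit has nontrivial stabilizer (periodic words); we would handle this by treating rotations as formally distinct labelled positions throughout, so that no division by orbit sizes occurs.

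If the alternating-sum choice of $P_j$ fails to give clean cancellation, the fallback is the simpler family $\sum_j (k_j)*(k_{j+1},\dots,k_{j+a-1})$. We would combine it with the inductively known congruences for depth $a-1$ words, using $(k_j)*(\cdot)$ only to move one letter at a time, and check the signs on small cases $a=3,4$ before writing the general argument.
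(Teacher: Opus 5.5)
\textbf{There is a genuine gap: the mechanism you predict for $\sum_j P_j$ is not what actually happens.}

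The depth-$a$ terms of a product $\boldsymbol{u}*\boldsymbol{v}$ with $\ell(\boldsymbol{u})+\ell(\boldsymbol{v})=a$ are exactly the shuffle $\boldsymbol{u}\sqcup\boldsymbol{v}$. The shuffle antipode identity $\sum_{i=0}^{a}(-1)^i\,\overleftarrow{(k_1,\dots,k_i)}\sqcup(k_{i+1},\dots,k_a)=0$ then shows that the depth-$a$ part of $P_0$ is $(k_1,\dots,k_a)+(-1)^a(k_a,\dots,k_1)$. So the reversed word survives alongside the rotation. Summing over $j$ gives $C(\bk)+(-1)^aC(\overleftarrow{\bk})$ plus lower-depth terms, never $\pm C(\bk)$ alone.

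Already for $a=3$, writing $x,y,z$ for $k_1,k_2,k_3$, one finds
\[
P_0=(x)*(y,z)-(y,x)*(z)=(x,y,z)-(z,y,x)+(x+y,z)-(y+z,x).
\]
The contraction terms cancel completely under cyclic summation, leaving $\sum_jP_j=C(x,y,z)-C(z,y,x)$. This congruence is true, but it says nothing about $C(x,y,z)$ itself. So there is nothing for your induction on contracted cyclic sums to act on.

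Your fallback family has the same defect. For $a=3$ it gives $2C(x,y,z)+C(z,y,x)$ plus three depth-two cyclic sums. For $a\ge 4$, inserting $k_j$ in the interior produces cyclic sums of other cyclic orderings, and the merges $k_j+k_{j+p}$ involve non-adjacent entries. For $a=3$ the two families together do close up, giving $3C(x,y,z)\equiv 3(x+y+z)$. For general $a$, however, you give no reason why the resulting linear system is solvable. What you call bookkeeping is the whole content of the lemma, and the structure you predict for it is incorrect.

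\textbf{A route that works.} The paper itself gives no proof here; it quotes Tanaka--Wakabayashi. A route that avoids all case analysis is duality.

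The graded dual of $(F_+,*)$ is the free algebra on letters $z_k$ ($k\ge 2$) under concatenation, with the index $(k_1,\dots,k_a)$ dual to the word $z_{k_1}\cdots z_{k_a}$. Its coproduct is multiplicative, with $\Delta z_n=z_n\otimes 1+1\otimes z_n+\sum_{i+j=n}z_i\otimes z_j$. In each weight, $F_+*F_+$ is precisely the annihilator of the primitive elements.

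The element $1+\sum_k z_k$ is grouplike, so the weight-$n$ part $p_n$ of its logarithm is primitive. It has coefficient $(-1)^{m-1}/m$ on every word of length $m$. Since $p_n=z_n+(\text{longer words})$, the $p_n$ generate the algebra. By Milnor--Moore and PBW, the Lie algebra they generate is therefore all of the primitives. Hence in weight $n$ the primitives are spanned by $p_n$ and commutators.

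Both $C(\bk)$ and the one-letter word $(n)$ pair to zero with every commutator. For $C(\bk)$ this holds because a cyclic sum is a trace. For $(n)$ it holds because commutators contain only words of length at least two. Finally, for $n=k_1+\cdots+k_a$,
\[
\langle C(\bk),p_n\rangle=a\cdot\frac{(-1)^{a-1}}{a}=(-1)^{a-1}\langle (n),p_n\rangle,
\]
which proves the congruence.
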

The proof of Lemma 3.1 uses only the recursive definition of the
quasi-shuffle product and the commutativity and associativity of the
underlying product on letters. Therefore, replacing the letter product $(r)\circ(s)=(r+s)$ by $(r)\circ_+(s)\coloneqq(r+s)+(r+s-1)$ in the proof, we obtain
\begin{align*}
\sum_{i=0}^{a-1}(k_{i+1},\dots,k_a,k_1,\dots,k_i)&\equiv (-1)^{a-1}(k_1\circ_+\cdots\circ_+k_a)\pmod{F_+*_+F_+}.
\end{align*}
Moreover, an induction on $a$ gives
\[
(k_1\circ_+\cdots\circ_+k_a)=\sum_{j=k-a+1}^{k}\binom{a-1}{k-j}(j),\qquad k\coloneqq k_1+\cdots+k_a.
\]
Combining these identities yields the following lemma.

\begin{Lem}\label{lem:2}
Let $\bk\coloneqq (k_1,\dots,k_a)\in F_+$,  and set $k=k_1+\cdots+k_a$. Then
\begin{align*}
\sum_{i=0}^{a-1}(k_{i+1},\dots,k_a,k_1,\dots,k_i)&\equiv (-1)^{a-1}\sum_{j=k-a+1}^{k}\binom{a-1}{k-j}(j)\pmod{F_+*_+F_+}.
\end{align*}
\end{Lem}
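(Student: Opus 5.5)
The plan is to combine the two displayed facts just before the statement: the congruence obtained by transplanting Tanaka--Wakabayashi's argument (Lemma \ref{lem:1}) to the modified letter product $\circ_+$, and the closed formula for an iterated $\circ_+$-product of letters. Lemma \ref{lem:2} then follows by substituting the second into the first.

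\textbf{Step 1: justify the transplanted congruence.} The Tanaka--Wakabayashi proof works in the quasi-shuffle algebra over letters $\{(k)\}_{k\ge 2}$. It uses only the recursive definition of the quasi-shuffle product and the fact that the letter product is commutative and associative. I will check two things.

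First, $\circ_+$, extended bilinearly by $(r)\circ_+(s)=(r+s)+(r+s-1)$, is commutative and associative. Commutativity is clear. For associativity, the generating function $x^{r}\mapsto x^{r}(1+x^{-1})$ gives
\[
(r)\circ_+(s)\circ_+(u)\leftrightarrow x^{r+s+u}(1+x^{-1})^2
\]
for either bracketing.

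Second, $*_+$ is exactly the quasi-shuffle product built from $\circ_+$. This is immediate from the recursive definition with $q=0$: the last two terms become $(k_1+l_1,\dots)+(k_1+l_1-1,\dots)$.

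There is also a closure requirement. For $r,s\ge2$ we have $r+s-1\ge 3\ge2$, so $F_+$ is closed under $\circ_+$ and $*_+$. Hence the ideal-like subspace $F_+*_+F_+$ makes sense, and the congruence holds verbatim with $(k_1+\cdots+k_a)$ replaced by $(k_1\circ_+\cdots\circ_+k_a)$.

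\textbf{Step 2: expand the iterated letter product.} By the generating-function model,
\[
(k_1)\circ_+\cdots\circ_+(k_a)\leftrightarrow x^{k}(1+x^{-1})^{a-1}=\sum_{i=0}^{a-1}\binom{a-1}{i}x^{k-i}.
\]
Setting $j=k-i$ gives $\sum_{j=k-a+1}^{k}\binom{a-1}{k-j}(j)$. Alternatively, one can induct on $a$ using Pascal's rule $\binom{a-1}{k-j}+\binom{a-1}{k-j-1}=\binom{a}{k-j}$, keeping track of how $k$ shifts when a new letter is adjoined.

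\textbf{Step 3: combine.} Substituting Step 2 into the congruence of Step 1 gives the statement.

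\textbf{Main obstacle.} The real content is Step 1: confirming that the Tanaka--Wakabayashi argument is purely formal, so that it does not use the specific letter product $(r)\circ(s)=(r+s)$ or weight considerations beyond commutativity and associativity. I would first reread their proof of Proposition 2.6 and verify that every manipulation is an instance of the recursive quasi-shuffle identity or of associativity of $\circ$. That makes the replacement legitimate.
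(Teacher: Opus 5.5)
Your proposal is correct and follows the paper's route: you transplant the Tanaka--Wakabayashi congruence (Lemma~\ref{lem:1}) to the letter product $\circ_+$, using only its commutativity and associativity (which you verify, along with closure of $F_+$), and then expand $(k_1\circ_+\cdots\circ_+k_a)$ as $\sum_{j=k-a+1}^{k}\binom{a-1}{k-j}(j)$. Your generating-function computation giving $x^{k}(1+x^{-1})^{a-1}$ is simply a compact substitute for the paper's induction on $a$.
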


Define a $\QQ$-linear map $f:F_+\to\CC$ by
\[
f(\bk)\coloneqq \overline{z}_N(({}_{\down}\bk_{\down})^{\vee}).
\]

Theorem \ref{thm:prove_conj} follows from the following stronger statement.
\begin{Thm}
Let $(k_1,\dots,k_a)\in F_+$, and set $k\coloneqq k_1+\cdots+k_a$. Then
\begin{align*}
\sum_{i=0}^{a-1}f(k_{i+1},\dots,k_a,k_1,\dots,k_i)&=\frac{(-1)^{a-1}}{N}\binom{N+a-1}{k-1}.
\end{align*}
\end{Thm}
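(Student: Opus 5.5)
Apply the $\QQ$-linear map $f$ to both sides of the congruence in Lemma \ref{lem:2}. Theorem \ref{thm:main0} says $f(\bk*_+\bl)=0$ for all $\bk,\bl\in F_+$, since every entry of such indices is at least $2$. Hence $f$ vanishes on $F_+*_+F_+$ (extended bilinearly), and we obtain
\[
\sum_{i=0}^{a-1}f(k_{i+1},\dots,k_a,k_1,\dots,k_i)=(-1)^{a-1}\sum_{j=k-a+1}^{k}\binom{a-1}{k-j}f((j)).
\]
One technical point needs checking: Lemma \ref{lem:2} holds modulo $F_+*_+F_+$, and the terms $(j)$ on the right must lie in $F_+$, i.e.\ $j\geq 2$. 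This holds because $k\geq 2a$ gives $j\geq k-a+1\geq a+1\geq 2$.

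So everything reduces to computing $f((j))=\ol{z}_N((({}_{\down}(j)_{\down})^{\vee}))$ for a depth-one index. Here ${}_{\down}(j)_{\down}=(j-2)$, and I will compute its Hoffman dual: $(j-2)^{\vee}=(\{1\}^{j-2})$. For $j=2$ this is the empty index, with $f((2))=\ol{z}_N(\varnothing)=1$. This identification comes from the arrow/dagger definitions, and the convention for $(0)$ must be checked carefully.

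It remains to evaluate $\ol{z}_N(\{1\}^{m})$ with $m=j-2$. The generating function is
\[
\sum_{m}z_N(\{1\}^m)X^m=\prod_{n=1}^{N-1}\Bigl(1+\frac{X}{[n]}\Bigr).
\]
With $\zeta=\zeta_N$, $1-\zeta$-normalization turns this into $\prod_{n=1}^{N-1}(1-\zeta^n+Y)/(1-\zeta^n)$ with $Y=(1-\zeta)X$. Using $\prod_{n=1}^{N-1}(x-\zeta^n)=(x^N-1)/(x-1)$ at $x=1+Y$, together with $\prod(1-\zeta^n)=N$, gives
\[
\sum_m\ol{z}_N(\{1\}^m)Y^m=\frac{(1+Y)^N-1}{NY},
\]
so $\ol{z}_N(\{1\}^m)=\frac1N\binom{N}{m+1}$ and $f((j))=\frac1N\binom{N}{j-1}$.

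Finally, Vandermonde's identity gives
\[
\sum_{j}\binom{a-1}{k-j}\binom{N}{j-1}=\binom{N+a-1}{k-1},
\]
with the range $k-a+1\le j\le k$ automatically enforced by the first binomial. This yields exactly $\frac{(-1)^{a-1}}{N}\binom{N+a-1}{k-1}$. The main obstacle is bookkeeping rather than analysis. One must get the Hoffman dual of $(j-2)$ right, including the degenerate case $j=2$. One must also confirm that Theorem \ref{thm:main0} applies to every product appearing in the Tanaka--Wakabayashi reduction, i.e.\ that the modulus $F_+*_+F_+$ involves only products of nonempty indices in $F_+$. Deducing Conjecture \ref{conj:main} afterwards means translating cyclic sums of $(\{1\}^{d},2,\dots)$ through ${}_{\down}(\cdot)_{\down}$ and $^\vee$, which is again pure index combinatorics.
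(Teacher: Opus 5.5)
Your proposal is correct and follows the paper's proof essentially step for step: you apply $f$ to Lemma~\ref{lem:2}, use Theorem~\ref{thm:main0} to kill $F_+*_+F_+$, evaluate $f((j))=\ol{z}_N(\{1\}^{j-2})=\frac{1}{N}\binom{N}{j-1}$, and finish with Vandermonde's identity. The only difference is that you derive this last evaluation yourself from the generating function $\prod_{n=1}^{N-1}\bigl(1+Y/(1-\zeta_N^n)\bigr)=\frac{(1+Y)^N-1}{NY}$, whereas the paper cites Theorem 1.1 of Bachmann--Takeyama--Tasaka, which makes your argument self-contained.
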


\begin{proof}
By Theorem \ref{thm:main0} and Lemma \ref{lem:2}, we have

\begin{align*}
\sum_{i=0}^{a-1}f(k_{i+1},\dots,k_a,k_1,\dots,k_i)&=(-1)^{a-1}\sum_{j=k-a+1}^k\binom{a-1}{k-j}f(j).
\end{align*}
For \(k-a+1\le j\le k\), \cite[Theorem 1.1]{btt2} gives
\[
f(j)=\ol{z}_N(\{1\}^{j-2})=\frac 1{N}\binom{N}{j-1}.
\]
Hence, Vandermonde's identity yields
\begin{align*}
\sum_{i=0}^{a-1}f(k_{i+1},\dots,k_a,k_1,\dots,k_i)&=\frac{(-1)^{a-1}}{N}\sum_{j=k-a+1}^k\binom{a-1}{k-j}\binom{N}{j-1}\\
&=\frac{(-1)^{a-1}}{N}\binom{N+a-1}{k-1}.
\end{align*}
This completes the proof.
\end{proof}

To deduce Theorem \ref{thm:prove_conj}, set \(a=t+1\) and \(k_i=d_{i-1}+2\) for \(1\leq i\leq a\). Then \(k=r+2\), and, by the definition of the Hoffman dual, the terms on the left-hand side of Theorem 3.3 are precisely the quantities
\[
(1-\zeta_N)^{-r}z_N\bigl(\{1\}^{d_j},2,\{1\}^{d_{j+1}},2,\ldots,2,\{1\}^{d_{j+t}}\bigr)\qquad (0\leq j\leq t),
\]
where the subscripts are read modulo \(t+1\). Multiplying the identity in Theorem 3.3 by \((1-\zeta_N)^r\) proves Conjecture 1.4.

\end{document}